\newtheorem{theorem}{Theorem}[section]
\newtheorem{condition}[theorem]{Condition}
\newtheorem{definition}[theorem]{Definition}
\newtheorem{example}[theorem]{Example}
\newtheorem{lemma}[theorem]{Lemma}
\newtheorem{notation}[theorem]{Notation}
\newtheorem{proposition}[theorem]{Proposition}
\newtheorem{remark}[theorem]{Remark}
\newenvironment{proof}[1][Proof]{\noindent\textbf{#1.} 
}{\ \rule{0.6em}{0.6em}}
\begin{document}

\title{Reduced Gr\"{o}bner Bases of Certain Toric Varieties; A New Short
Proof}
\author{Ibrahim Al-Ayyoub}
\maketitle

\begin{abstract}
Let $K$ be a field and let $m_{0},...,m_{n}$ be an almost arithmetic
sequence of positive integers. Let $C$ \ be a toric variety in the affine $%
\left( n+1\right) $-space, defined parametrically by $x_{0}=t^{m_{0}},\ldots
,x_{n}=t^{m_{n}}$. In this paper we produce a minimal Gr\"{o}bner basis for
the toric ideal which is the defining ideal of $C$ and give sufficient and
necessary conditions for this basis to be the reduced Gr\"{o}bner basis of $%
C $ , correcting a previous work of \cite{Sen} and giving a much simpler
proof than that of \cite{Ayy}.
\end{abstract}

\section*{Introduction}

Let $n\geq 2$, $K$ a field and let $x_{0},\ldots ,x_{n},t$ be
indeterminates. Let $m_{0},\ldots ,m_{n}$ be an almost arithmetic sequence
of positive integers, that is, some $n-1$ of these form an arithmetic
sequence, and assume $gcd(m_{0},\ldots ,m_{n})=1$. Let $P$\ be the kernel of
the $K$-algebra homomorphism $\eta :K[x_{0},\ldots ,x_{n}]\rightarrow K[t]$,
defined by $\eta (x_{i})=t^{m_{i}}$. Such an ideal is called a \textit{toric
ideal }and the variety $V(P)$, the zero set of $P$, is called an \textit{%
affiine toric variety}. The definition of toric variety that we us is the
same as the definition given in \cite{Stu1}. This differs from the
definition found in the algebraic geometry literature (as in \cite{Fulton})
which requires the variety to be normal. Toric ideals are an interesting
kind of ideals that have been studied by many authors, for example, see \cite%
{Stu2} and\ Chapter 4 of \cite{Stu1}. The theory of toric varieties plays an
important role at the crossroads of geometry, algebra and combinatorics.

\ \ \ 

A set of generators for the ideal $P$ was explicitly constructed in \cite%
{PaSi}. We call these generators the \textit{Patil-Singh generators.} Out of
this generating set, Patil \cite{Pat} constructed a minimal generating set $%
\Omega $ for the ideal $P$. We call the elements of $\Omega ~$the \textit{%
Patil generators. }Sengupta \cite{Sen} proved that $\Omega $ forms a Gr\"{o}%
bner basis for the relation ideal $P$ with respect to the grevlex monomial
order, however, Al-Ayyoub \cite{Ayy} showed that Sengupta's proof is not
complete, as in fact $\Omega $ is not a Gr\"{o}bner basis in all cases, see
Remark~\ref{Patil-Not-Gb} and Remark~\ref{PnotGBinSomeA3}. The proof
introduced by Al-Ayyoub \cite{Ayy} is computational as it uses the
Buchberger criterion and the division algorithm and it did not characterize
whether the given Gr\"{o}bner basis is reduced. The goal of this paper is to
produce a minimal Gr\"{o}bner basis for $P$, give sufficient and necessary
conditions for this basis to be reduced, and to give a new proof that is
based on a lemma of Aramova et al. \cite{AHH}. The proof given in this paper
is much shorter and simpler than the computational work given in \cite{Ayy}
or \cite{Sen}. The author thanks the referee for suggesting to use a result
of \cite{AHH} that shortened the proof.

\section{Generators for Toric Varieties\label{Patil-Gens}}

In this part we recall the construction, given in \cite{PaSi} and \cite{Pat}%
, of the generating set of the defining ideal $P$ of certain monomial curves
(toric varieties), and we also recall the result of \cite{Ayy} proving that
the set given in \cite{Pat}\ is not a Gr\"{o}bner basis for $P$. We shall
use the notation and the terminology from \cite{PaSi} and \cite{Pat} with a
slight difference in naming some variables and constants. Let $n\geq 2$ be
an integer and let $p=n-1$ . Let $m_{0},\ldots ,m_{p}$ be an arithmetic
sequence of positive integers with $0<m_{0}<\cdots <m_{p}$, let $m_{n}$ be
arbitrary, and $gcd(m_{0},\ldots ,m_{n})=1$. Let $\Gamma $ denote the
numerical semigroup that is generated by $m_{0},\ldots ,m_{n}$ i.e. $\Gamma
=\sum\limits_{i=0}^{n}\mathbb{N}_{0}m_{i}$ with $\mathbb{N}_{0}=\{0\}\cup 
\mathbb{N}$. We assume throughout that $\Gamma $\ is minimally generated by $%
m_{0},\ldots ,m_{n}$. Put $\Gamma ^{\prime }=\sum\limits_{i=0}^{p}\mathbb{N}%
_{0}m_{i}$. Thus $\Gamma =\Gamma ^{\prime }+\mathbb{N}_{\mathbf{0}}m_{n}$.
Let $S=\{\gamma \in \Gamma \mid \gamma -m_{0}\notin \Gamma \}$.

\begin{notation}
\label{g-sub-t}\textit{For }$a,b\in \mathbb{Z}~$\textit{\ let }$[a,b]=\{t\in 
\mathbb{Z}\mid a\leq t\leq b\}$\textit{. For }$t\geq 0$\textit{, let }$%
q_{t}\in \mathbb{Z}\mathbf{,}$\textit{\ }$r_{t}\in \lbrack 1,p]$\textit{\
and }$g_{t}\in \Gamma ^{\prime }$\textit{\ \ be defined by }$t=q_{t}p+r_{t}$%
\textit{\ and }$g_{t}=q_{t}m_{p}+m_{r_{t}}$.\ \ \ \ \ \ \ \ \ \ \ \ \ 
\end{notation}

The following lemma provides us with the parameters and the equalities that
are crucial for the new proof.\ \ 

\begin{lemma}
\label{Parameters}(Lemmas 3.1 and 3.2, \cite{PaSi})\textbf{\ }\textit{Let }$%
u=min\{t\geq 0\mid g_{t}\notin S\}$\textit{\ and }$\upsilon =min\{b\geq
1\mid bm_{n}\in \Gamma ^{\prime }\}$\textit{.}\newline
(a) \textit{There exist unique integers }$w\in \lbrack 0,\upsilon -1]$%
\textit{, }$z\in \lbrack 0,u-1]$\textit{, }$\lambda \geq 1$\textit{, }$\mu
\geq 0$\textit{, and }$\nu \geq 2$\textit{\ such that\newline
\ \ (i) }$g_{u}=\lambda m_{0}+wm_{n}$\textit{;\newline
\ \ (ii) }$\upsilon m_{n}=\mu m_{0}+g_{z}$\textit{;\newline
\ \ (iii) }$g_{u-z}+(\upsilon -w)m_{n}=\nu m_{0}$\textit{, where }$\nu
=\left\{ 
\begin{tabular}{ll}
$\lambda +\mu +1\text{,}$ & if$\text{\ \ }r_{u-z}<r_{u}\text{;}$ \\ 
$\lambda +\mu \text{,}$ & $\text{if \ }r_{u-z}\geq r_{u}\text{.}$%
\end{tabular}%
\right. $\newline
\newline
\textit{(b) Let }$\ V=[0,u-1]\times \lbrack 0,\upsilon -1]$\textit{\ and }$%
W=[u-z,u-1]\times \lbrack \upsilon -w,\upsilon -1]$\textit{. Then every
element of }$\Gamma $ can be expressed uniquely in the form $%
am_{0}+g_{s}+bm_{n}$ with $a\in \mathbb{N}_{0}$ and $(s,b)\in V-W.$
\end{lemma}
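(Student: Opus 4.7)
The plan is to extract each parameter from the minimality conditions built into $u$ and $\upsilon$, then obtain part (b) by identifying $\{am_{0}+g_{s}+bm_{n}:(s,b)\in V\setminus W\}$ with the canonical Ap\'{e}ry-set decomposition $\Gamma =\mathbb{N}_{0}m_{0}+S$.

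For (a)(i), I would start from the defining property of $u$, namely $g_{u}-m_{0}\in \Gamma$, and write $g_{u}-m_{0}=\alpha +bm_{n}$ with $\alpha \in \Gamma ^{\prime }$ and $b\geq 0$. Reducing $b$ modulo $\upsilon $ by absorbing $\upsilon m_{n}\in \Gamma ^{\prime }$ into $\alpha $ forces $0\leq w\leq \upsilon -1$. The decisive structural claim, resting on the arithmetic-progression hypothesis on $m_{0},\ldots ,m_{p}$ together with the minimality of $u$, is that after this reduction the $\Gamma ^{\prime }$-summand is a pure multiple of $m_{0}$; once this is in place, (a)(i) follows. An entirely parallel argument, starting from $\upsilon m_{n}\in \Gamma ^{\prime }$ and extracting $m_{0}$ maximally, yields (a)(ii): what remains after maximum extraction must lie in $\{g_{t}:t\in \lbrack 0,u-1]\}$, because any $g_{t}$ with $t\geq u$ would admit a further $m_{0}$-extraction via (a)(i), contradicting maximality. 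For (a)(iii), I would substitute (i) and (ii) into the identity $g_{z}+g_{u-z}=g_{u}$ (when $r_{u-z}\geq r_{u}$) or $g_{z}+g_{u-z}=g_{u}+m_{0}$ (when $r_{u-z}<r_{u}$), the case split coming from whether the base-$p$ addition of $r_{z}$ and $r_{u-z}$ carries past $m_{p}$; rearrangement then produces the two formulas for $\nu $.

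For (b), I would use $\mathrm{Ap}(\Gamma ,m_{0})=S$ with $|S|=m_{0}$ and the unique canonical decomposition $\gamma =am_{0}+\sigma $ with $\sigma \in S$, so that it suffices to show $\{g_{s}+bm_{n}:(s,b)\in V\setminus W\}=S$. For existence, equation (i) reduces any $g_{t}$-index into $[0,u-1]$, equation (ii) reduces the $m_{n}$-exponent into $[0,\upsilon -1]$, and equation (iii) removes any residual pair $(s,b)\in W$ at the cost of inflating the $m_{0}$-coefficient. For uniqueness, two distinct representations with parameters in $V\setminus W$ would, after cancellation, produce a minimal nontrivial relation among the generators inside $V$, but the unique such minimal relation is (iii), whose exceptional pair $(u-z,\upsilon -w)$ lies in $W$, contradicting the hypothesis.

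The main obstacle is the structural step in (a)(i), and its twin in (a)(ii), asserting that the $\Gamma ^{\prime }$-residue is pure in $m_{0}$. This is where the arithmetic-sequence hypothesis enters decisively: each $m_{i}$ with $1\leq i\leq p-1$ satisfies $2m_{i}=m_{i-1}+m_{i+1}$, so any nontrivial $m_{i}$-coefficient in the residue can be redistributed among $m_{0},\ldots ,m_{p}$ in a way that would exhibit a smaller $t$ with $g_{t}\notin S$, contradicting the minimality of $u$. Once this reduction principle is cemented, the remainder of (a) is bookkeeping and (b) reduces to the Ap\'{e}ry-set transversal argument described above.
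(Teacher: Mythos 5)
The paper does not prove this lemma: it imports it verbatim from Patil--Singh (Lemmas 3.1 and 3.2 of \cite{PaSi}), so there is no in-paper proof to compare against. Your outline follows the standard route for that reference (canonical $\Gamma'$-decomposition plus the Ap\'{e}ry-set identification $S=\mathrm{Ap}(\Gamma,m_0)$), so the skeleton is right. The reduction of the $m_n$-exponent modulo $\upsilon$, the derivation of (iii) from $g_z+g_{u-z}=g_u+\delta m_0$ with $\delta\in\{0,1\}$ according to whether $r_{u-z}<r_u$, and the termination argument via the strictly decreasing weight $s\upsilon+bu$ for part (b) all work as you describe.

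The genuine gap is in the step you yourself flag as the crux. Saying that a nontrivial $m_i$-coefficient ``can be redistributed via $2m_i=m_{i-1}+m_{i+1}$ so as to exhibit a smaller $t$ with $g_t\notin S$'' does not, as stated, produce such a $t$: redistribution preserves the element of $\Gamma'$ but does not by itself generate an index $t<u$, so the contradiction never materializes. What is actually needed is the canonical-form fact for $\Gamma'$ (every $\alpha\in\Gamma'$ is uniquely $am_0+g_s$ with $a\geq 0$, $s\geq 0$) combined with the \emph{subtraction identity} $g_u-g_s\in\{g_{u-s},\,g_{u-s}-m_0\}$ for $s<u$, which is where the arithmetic hypothesis really enters. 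With that identity, $s\geq 1$ forces $g_{u-s}-m_0\in\Gamma$ with $u-s<u$, the desired contradiction to the minimality of $u$; the redistribution relations only underlie the canonical-form fact, one level down. The same mechanism repairs (a)(ii), but note that there the case split is not on extraction of $m_0$ alone: if $w>0$ the contradiction is to the minimality of $\upsilon$ (one produces $(\upsilon-w)m_n\in\Gamma'$), and only if $w=0$ is it to the maximality of $\mu$, and then one needs the extra observation that $w=0\Rightarrow\lambda\geq 2$. Finally, your uniqueness argument for (b) (``the unique minimal relation inside $V$ is (iii)'') is an assertion rather than an argument; the cleanest route is the cardinality count $|V\setminus W|=u\upsilon - zw=m_0=|\mathrm{Ap}(\Gamma,m_0)|$ together with surjectivity of $(s,b)\mapsto g_s+bm_n$ onto $S$, and that count itself needs a short computation you have not supplied.
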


\begin{notation}
\label{W&q'&r'}\textit{Let }$q=q_{u},$\textit{\ }$r=r_{u},$\textit{\ }$%
q^{\prime }=q_{u-z},$\textit{\ }$r^{\prime }=r_{u-z}$\textit{. From now on,
the symbols }$q,$\textit{\ }$q^{\prime },$\textit{\ }$r,$\textit{\ }$%
r^{\prime },$\textit{\ }$u,\upsilon ,$\textit{\ }$w,$\textit{\ }$z,$\textit{%
\ }$\lambda ,$\textit{\ }$\mu ,$\textit{\ }$\nu ,V$\textit{\ and }$W$\textit{%
\ will have the meaning assigned to them by this notation and the lemma
above.}
\end{notation}

\begin{remark}
\label{q>0-and-u>p}Note that for $1\leq i\leq p$ we have $%
g_{i}-m_{0}=m_{i}-m_{0}.$ \textit{Then by the minimality assumption on the
generators of \ }$\Gamma $\textit{\ it follows that} $u>p$, hence $q>0.$
\end{remark}

We recall the construction and the result given in \cite{PaSi}: let $p=n-1$
and let

\ \ \ \ \ 

$\xi _{i,j}=\left\{ 
\begin{tabular}{ll}
$x_{i}x_{j}-x_{0}x_{i+j}$, & $\text{if\ \ \ \ }i+j\leq p$; \\ 
$x_{i}x_{j}-x_{i+j-p}x_{p}$, & $\text{if \ \ \ }i+j>p$,%
\end{tabular}%
\right. $

$\varphi _{i}=x_{r+i}x_{p}^{q}-x_{0}^{\lambda -1}x_{i}x_{n}^{w}$,

$\psi _{j}=x_{r^{\prime }+j}x_{p}^{q^{\prime }}x_{n}^{\upsilon
-w}-x_{0}^{\nu -1}x_{j}$,

$\theta $\ $=\left\{ 
\begin{tabular}{ll}
$x_{n}^{\upsilon }-x_{0}^{\mu }x_{r-r^{\prime }}x_{p}^{q-q^{\prime }}\text{,}
$ & if $\text{\ }r^{\prime }<r\text{;}$ \\ 
$x_{n}^{\upsilon }-x_{0}^{\mu }x_{p+r-r^{\prime }}x_{p}^{q-q^{\prime }-1}%
\text{,}$ & if $\text{\ }r^{\prime }\geq r$.%
\end{tabular}%
\right. $\newline

\ \ \ \ 

The following intervals are introduced by \cite{Pat} in the process of
producing minimal generating sets.

\ \ \ \ \ \ \ 

$I=\left\{ 
\begin{tabular}{ll}
$\lbrack 0,p-r]\text{,}$ & if$\text{ }\mu \neq 0\text{ }$or$\text{ }W=\phi 
\text{;}$ \\ 
$\lbrack \max (r_{z}-r+1,0),p-r]\text{,}$ & if$\text{ }\mu =0\text{ }$and$%
\text{ }W\neq \phi $,%
\end{tabular}%
\ \right. $

$J=\left\{ 
\begin{tabular}{ll}
$\phi \text{,}$ & if $\text{\ }W=\phi \text{;}$ \\ 
$\lbrack 0,\min (z-1,p-r^{\prime })]\text{,}$ & if$\text{ \ }W\neq \phi 
\text{.}$%
\end{tabular}%
\ \right. $

\begin{theorem}
\label{Patil&Patil-Singh Gens}(Theorem 4.5, \cite{PaSi}) The set 
\begin{equation*}
\{\xi _{i,j}\mid 1\leq i\leq j\leq p-1\}\cup \{\theta \}\cup \{\varphi
_{i}\mid 0\leq i\leq p-r\}\cup \{\psi _{j}\mid 0\leq j\leq p-r^{\prime }\}
\end{equation*}%
forms a generating set for the ideal $P$. The elements in this set are
called the \textit{Patil-Singh generators}. Also, (Theorem 4.5, \cite{Pat})
the set 
\begin{equation*}
\Omega =\{\xi _{i,j}\mid 1\leq i\leq j\leq p-1\}\cup \{\theta \}\cup
\{\varphi _{i}\mid i\in I\}\cup \{\psi _{j}\mid j\in J\}
\end{equation*}%
forms a minimal generating set for the ideal $P$. The elements in this set
are called the Patil \textit{generators}.
\end{theorem}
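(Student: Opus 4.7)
My plan is to establish the two assertions separately, using in both cases the unique normal form for elements of $\Gamma$ provided by Lemma~\ref{Parameters}(b).

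For the claim that the Patil--Singh set generates $P$, the plan is to exploit the fact that $P$ is a binomial ideal (being toric) and show that any binomial $x^{\alpha}-x^{\beta}\in P$ can be reduced to zero modulo the proposed set. I would first apply the quadratic relations $\xi_{i,j}$ repeatedly to rewrite each monomial $x^{\alpha}$ as a monomial of the form $x_{0}^{a}x_{r_{s}}x_{p}^{q_{s}}x_{n}^{b}$ (possibly with the middle factor absent); this works because $\xi_{i,j}$ records the two ways of splitting $m_{i}+m_{j}$ as $m_{0}+m_{i+j}$ or $m_{i+j-p}+m_{p}$. Once the monomial has this shape, I would use $\varphi_{i}$ to force $s\in[0,u-1]$, then $\psi_{j}$ and $\theta$ to force $b\in[0,\upsilon-1]$ and further push $(s,b)$ out of $W$. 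Both $x^{\alpha}$ and $x^{\beta}$ then land in the canonical form indexed by $V\setminus W$, and by the uniqueness in Lemma~\ref{Parameters}(b) these canonical forms must coincide, proving $x^{\alpha}-x^{\beta}$ lies in the ideal generated by the Patil--Singh set.

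For the minimality of $\Omega$, I would proceed in two steps. First, I would show that each generator omitted from the Patil--Singh set (the $\varphi_{i}$ with $i\notin I$ and the $\psi_{j}$ with $j\notin J$) is a $K[x]$-combination of the retained generators: in the branch $\mu=0$, $W\neq\phi$, an explicit rewriting using $\theta$ and a suitable $\psi_{j}$ expresses each $\varphi_{i}$ with $i<r_{z}-r+1$ in terms of the others, and dually for the $\psi_{j}$. Second, no element of $\Omega$ itself can be removed: since $P$ is a $\Gamma$-graded ideal in a polynomial ring, minimality is equivalent to the images of the generators being $K$-linearly independent in $P/\mathfrak{m}P$ where $\mathfrak{m}=(x_{0},\ldots,x_{n})$, and this follows once one checks that the multidegrees of the elements of $\Omega$ are pairwise distinct within each $\Gamma$-graded component.

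The main obstacle is the bookkeeping in the reduction step. One must verify that successive applications of $\varphi_{i}$, $\psi_{j}$, and $\theta$ drive the pair $(s,b)$ into $V\setminus W$ rather than into $W$, while handling simultaneously the case splits $W=\phi$ vs.\ $W\neq\phi$, $\mu=0$ vs.\ $\mu\neq 0$, and $r'<r$ vs.\ $r'\geq r$ appearing in the definitions of $\theta$, $I$, and $J$. The critical algebraic identity is the third equality of Lemma~\ref{Parameters}(a), $g_{u-z}+(\upsilon-w)m_{n}=\nu m_{0}$ with its case-dependent value of $\nu$, which controls exactly how $\theta$ interacts with the corner of $W$; once termination of the reduction is established, the uniqueness of normal forms closes both parts of the theorem cleanly.
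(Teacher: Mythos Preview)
The paper does not actually prove this theorem: it is stated as a citation of Theorem~4.5 of \cite{PaSi} and Theorem~4.5 of \cite{Pat}, and no proof environment follows it. The paper merely recalls this result in order to set up the notation needed for its own main theorem (Theorem~\ref{MainThm}). So there is no ``paper's own proof'' against which to compare your proposal.

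That said, your strategy is essentially the one used in the original sources \cite{PaSi} and \cite{Pat}: reduce an arbitrary binomial of $P$ to the normal form indexed by $V\setminus W$ and invoke the uniqueness in Lemma~\ref{Parameters}(b). Two small points. First, in the minimality step your phrase ``the multidegrees of the elements of $\Omega$ are pairwise distinct within each $\Gamma$-graded component'' is self-contradictory; what you presumably mean is either that the $\Gamma$-degrees of the elements of $\Omega$ are pairwise distinct, or, when two generators share the same $\Gamma$-degree, that their images in $P/\mathfrak{m}P$ are independent---and you must actually check which of these holds. Second, the termination argument you flag as the ``main obstacle'' is genuinely delicate: applying $\theta$ or $\psi_{j}$ can push $s$ back above $u-1$ (since the tail of $\theta$ contains $x_{r_{z}}x_{p}^{q_{z}}$), so you need a well-founded ordering on the triples $(a,s,b)$, not just on $b$ alone, to guarantee the rewriting process halts in $V\setminus W$.
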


Considering the indices we note that handling the Patil-Singh generators is
simpler than the Patil generators.

Sengupta \cite{Sen} tried to prove that the set $\Omega $ forms a Gr\"{o}%
bner basis for $P$ with respect to the grevlex monomial order using the
grading $wt(x_{i})=m_{i}$ with\textit{\ }$x_{0}<x_{1}<\cdots <x_{n}$. In
this ordering \textit{\ }$\prod\limits_{i=0}^{n}x_{i}^{a_{i}}>_{grevlex}%
\prod\limits_{i=0}^{n}x_{i}^{b_{i}}$\ if in the ordered tuple $%
(a_{1}-b_{1},\ldots ,a_{n}-b_{n})$\ the left-most nonzero entry is negative.
Al-Ayyoub \cite{Ayy} proved that Sengupta's proof works for arithmetic
sequences, but it is incomplete for the almost arithmetic sequences. Below
we recall the work of \cite{Ayy} for the convenience of the reader;

\begin{remark}
\label{Patil-Not-Gb} Assume $r^{\prime }\geq r$ , $\mu =0$, and$\text{ }%
W\neq \phi $. Then \textit{Patil generators are not a Gr\"{o}bner basis with
respect to the grevlex monomial ordering with }$x_{0}<x_{1}<\cdots <x_{n}$ 
\textit{and\ with the grading }$wt(x_{i})=m_{i}$\textit{.}
\end{remark}

\begin{proof}
As $u-z=(q-q_{z})p+(r-r_{z})$ then $r^{\prime }\geq r$ if and only if $%
r_{z}\geq r$. Assume $r^{\prime }\geq r$, then $r_{z}-r+1>0$ and also $%
\theta =x_{n}^{\upsilon }-x_{0}^{\mu }x_{p+r-r^{\prime }}x_{p}^{q-q^{\prime
}-1}$. Assume also that $\mu =0\text{ }$and$\text{ }W\neq \phi $, then $%
I=[\max (r_{z}-r+1,0),p-r]=[r_{z}-r+1,p-r]$. Under these assumptions the
S-polynomial $S(\psi _{k},\theta )$\ can not be reduced to zero modulo $%
\Omega $: for $0\leq k<r_{z}-r+1$ consider $S(\psi _{k},\theta )=x_{0}^{\mu
}S_{1}$ where $S_{1}=x_{0}^{\lambda -1}x_{k}x_{n}^{w}-\underline{%
x_{r^{\prime }+k}x_{p+r-r^{\prime }}x_{p}^{q-1}},$ with the leading monomial
underlined. We note that $LM(S_{1})$, the leading monomial of $S_{1}$, is a
multiple of $LM(\xi _{r^{\prime }+j,p+r-r^{\prime }})$ only. Hence, the only
possible way to reduce $S_{1}$ with respect to $\Omega $ is by using $\xi
_{r^{\prime }+j,p+r-r^{\prime }}$. However, none of the terms of the
binomial $S_{1}+x_{p}^{q-1}\xi _{r^{\prime }+j,p+r-r^{\prime
}}=x_{r+k}x_{p}^{q}-x_{0}^{\lambda -1}x_{k}x_{n}^{w}$ is a multiple of any
of the leading terms of Patil generators. Therefore, it can not be reduced
to $0$ modulo $\Omega $.
\end{proof}

\ \ \ \ 

The following shows that the hypothesis of the remark above are satisfied by
an infinite family of toric varieties:

\begin{remark}
\label{PnotGBinSomeA3} Let $m_{0}\geq 5$ be an odd integer. Let $P$ be the
defining ideal of the toric variety that corresponds to the almost
arithmetic sequence $m_{0},m_{0}+1,m_{0}-1$. Then the Patil generators for
the ideal $P$ are not\ a Gr\"{o}bner basis \textit{with respect to the
grevlex monomial ordering with }$x_{0}<x_{1}<x_{2}\ $\textit{and with the
grading }$wt(x_{i})=m_{i}$\textit{.}
\end{remark}

\begin{proof}
Observe: $p=1,n=2$ , and $g_{i}=i(m_{0}+1)$ for all $i$.

Let $\upsilon ,\mu ,$ and $z$\ be as defined in Lemma~\ref{Parameters}. Then 
$\upsilon (m_{0}-1)=\mu m_{0}+z(m_{0}+1)$ for some integers $\mu ,z\geq 0$ .
This implies $\mu +z<v.$ Note that $\upsilon (m_{0}-1)=\mu
m_{0}+z(m_{0}+1)=(\mu +z)(m_{0}-1)+\mu +2z$. Thus $\mu +2z=s(m_{0}-1)$ for
some $s\geq 1$. Hence, $\upsilon >\mu +z\geq \dfrac{\mu }{2}+z=\dfrac{s}{2}%
(m_{0}-1)\geq \frac{m_{0}-1}{2}$. Thus,%
\begin{equation}
\upsilon \geq \frac{m_{0}+1}{2}.  \label{v>or=(m0+1)/2}
\end{equation}%
On the other hand, note that%
\begin{equation}
\dfrac{m_{0}+1}{2}(m_{0}-1)=\dfrac{m_{0}-1}{2}(m_{0}+1)\in \Gamma ^{\prime }.
\label{v((m0-1))}
\end{equation}%
Therefore, by the minimality of $\upsilon $ we must have%
\begin{equation}
\upsilon \leq \frac{m_{0}+1}{2}.  \label{v<=(m0+1)/2}
\end{equation}%
By~$\left( \ref{v>or=(m0+1)/2}\right) $ and ~$\left( \ref{v<=(m0+1)/2}%
\right) $ it follows that $\upsilon =\dfrac{m_{0}+1}{2}$.

Let $u,\lambda ,w,$ and $g_{u}$ be as defined in Lemma~\ref{Parameters}. Note%
\begin{equation}
\dfrac{m_{0}+1}{2}(m_{0}+1)-m_{0}=\dfrac{m_{0}-1}{2}(m_{0}-1)+m_{0}\in
\Gamma .  \label{u((m0+1))}
\end{equation}%
Therefore,%
\begin{equation}
u\leq \dfrac{m_{0}+1}{2}.  \label{u<or=(m0+1)/2}
\end{equation}

Claim $w>0$: if $w=0$ then $g_{u}=\lambda m_{0}$, thus $u(m_{0}+1)=\lambda
m_{0}$. But $m_{0}$ and $m_{0}+1$ are relatively prime, therefore, we must
have $u=bm_{0}$ for some $b\geq 1$, a contradiction to~$\left( \ref%
{u<or=(m0+1)/2}\right) $. Thus $w>0$.

Claim $\lambda <u$: by Lemma~\ref{Parameters} we have $u(m_{0}+1)=\lambda
m_{0}+w(m_{0}-1)$. If $\lambda \geq u$ then $w(m_{0}-1)=u(m_{0}+1)-\lambda
m_{0}=u+(u-\lambda )m_{0}$, which implies $u\geq m_{0}-1$ as $w>0$, a
contradiction to~$\left( \ref{u<or=(m0+1)/2}\right) $. Thus $\lambda <u$.

Now consider $w(m_{0}-1)=u(m_{0}+1)-\lambda m_{0}=\left( u-\lambda \right)
(m_{0}-1)+2u-\lambda $. As $w(m_{0}-1)>0$ and $u>\lambda $ we must have $%
2u-\lambda =c(m_{0}-1)$ for some $c\geq 1$. But if $u\leq \dfrac{m_{0}-1}{2}$
then $2u-\lambda \leq m_{0}-1-\lambda $, a contradiction as $\lambda \geq 1$%
. Therefore, 
\begin{equation}
u>\dfrac{m_{0}-1}{2}.  \label{u>>(m0-1)/2}
\end{equation}%
By $\left( \ref{u<or=(m0+1)/2}\right) $ and $\left( \ref{u>>(m0-1)/2}\right) 
$ it follows that $u=\dfrac{m_{0}+1}{2}$.

Now by the uniqueness in Lemma~\ref{Parameters} and as of $\left( \ref%
{v((m0-1))}\right) $ and $\left( \ref{u((m0+1))}\right) $ it follows that $%
\mu =0$, $z=\dfrac{m_{0}-1}{2}$, $\lambda =2$ and $w=\dfrac{m_{0}-1}{2}$.
Finally, note that $r=p=r^{\prime }=1$. Therefore, the parameters $z,w,\mu
,p,r,$ and $r^{\prime }$ all satisfy the assumptions of the previous remark,
hence done.
\end{proof}

\section{Reduced Gr\"{o}bner Bases\label{PtoGB}\ }

In the following we combine the results of \cite{PaSi}\ and \cite{Pat} to
obtain the set of generators that we prove to be a minimal (the reduced) Gr%
\"{o}bner Basis. In particular, we pick an appropriate set of indices
(different from Sengupta \cite{Sen}), as well as, we modify the form of the
binomial $\theta $ as follows; let $u,$ $z,$ $q,$ $r,$ $q^{\prime }=q_{u-z}$%
, and $r^{\prime }=r_{u-z}$ be as in Lemma~\ref{Parameters} and Notation~\ref%
{W&q'&r'}. Let $z=q_{z}p+r_{z}$ with $q_{z}\in \mathbb{Z}$ and $r_{z}\in
\lbrack 1,p]$. By Notation~\ref{g-sub-t} it is clear that $q_{z}\leq q$
since $0\leq z\leq u-1$. As $u-z=(q-q_{z})p+(r-r_{z})$, it follows that $%
q^{\prime }=q-q_{z}-\varepsilon $ and $r^{\prime }=\varepsilon p+r-r_{z}$
where $\varepsilon =0$ or $1$ according as $r>r_{z}$ or $r\leq r_{z}.$
Therefore, $r^{\prime }<r$\ if and only if $r_{z}<r$. Thus we rewrite $%
\theta =x_{n}^{\upsilon }-x_{0}^{\mu }x_{r_{z}}x_{p}^{q_{z}}.$ Then the
generators that we prove to be a minimal (the reduced)\ Gr\"{o}bner basis
are as follows (with the leading monomial underlined);

\ \ 

\begin{tabular}{lll}
$\varphi _{i}$ & $=\underline{x_{r+i}x_{p}^{q}}-x_{0}^{\lambda
-1}x_{i}x_{n}^{w}$, & for \ \ $\ 0\leq i\leq p-r$; \ \  \\ 
$\psi _{j}$ & $=\underline{x_{r^{\prime }+j}x_{p}^{q^{\prime
}}x_{n}^{\upsilon -w}}-x_{0}^{\nu -1}x_{j}$, & for \textit{\ \ \ }$j\in J$;
\\ 
$\theta $ & $=\underline{x_{n}^{\upsilon }}-x_{0}^{\mu
}x_{r_{z}}x_{p}^{q_{z}}$, &  \\ 
$\xi _{i,j}$ & $=\left\{ 
\begin{tabular}{ll}
$\underline{x_{i}x_{j}}-x_{0}x_{i+j}$, & $\text{if\ \ \ \ }i+j\leq p$; \\ 
$\underline{x_{i}x_{j}}-x_{i+j-p}x_{p}$, & $\text{if \ \ \ }i+j>p$,%
\end{tabular}%
\right. $ & for\ \ \ $1\leq i\leq j\leq p-1$.%
\end{tabular}%
\newline

\ \ 

Note that this set of generators contains the set of Patil generators and it
is contained in the set of Patil-Singh generators.

\begin{definition}
\label{minimalGB}\textit{Let }$I$\textit{\ be a polynomial ideal and }$G$%
\textit{\ a Gr\"{o}bner basis for }$I$\textit{\ such that: \newline
(i) }$LC(f)=1$\textit{\ for all }$f\in G$\textit{, where }$LC(f)$\textit{\
is the leading coefficient of }$f$\textit{.\newline
(ii) For all }$f\in G$\textit{, }$LM(f)\notin \langle LM\{G-\{f\}\}\rangle .$%
\newline
(ii' ) \textit{For all }$f\in G$\textit{, no monomial\ appearing in }$f$%
\textit{\ lies in }$\langle LM\{G-\{f\}\}\rangle .$\newline
Then $G$ is called \textit{\textbf{minimal}} if it satisfies (i) and (ii),
and it is called \textbf{reduced} if it satisfies (i) and (ii').
\end{definition}

\begin{condition}
Let C1 and C2 refer to the conditions as follows

\textbf{C1}: $J\neq \phi ,q^{\prime }=0,\upsilon -w\leq w,\lambda =1,$ and $%
r^{\prime }\leq p-r$.

\textbf{C2}: $q=1$ and $r\leq p-2$.
\end{condition}

The following is the main result of this paper.

\begin{theorem}
\label{MainThm}\textit{The set }%
\begin{equation*}
G=\mathit{\ }\{\varphi _{i}\mid 0\leq i\leq p-r\}\cup \{\psi _{j}\mid j\in
J\}\cup \{\theta \}\mathit{\ }\cup \mathit{\ }\{\xi _{i,j}\mid 1\leq i\leq
j\leq p-1\}
\end{equation*}%
\textit{is a minimal Gr\"{o}bner basis for the ideal }$P$\textit{\ with
respect to the grevlex monomial order with }$x_{0}<x_{1}<\cdots <x_{n}$%
\textit{\ and with the grading }$wt(x_{i})=m_{i}$\textit{.}\ Moreover, $G$\
is reduced if and only if none of the conditions C1 and C2 holds.\ \ \ 
\end{theorem}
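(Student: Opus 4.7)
My plan is to invoke the lemma of Aramova--Herzog--Hibi, which in our $\Gamma$-graded setting states that if $G \subseteq P$ consists of $\Gamma$-homogeneous polynomials and $\langle LM(G)\rangle \subseteq \mathrm{in}(P)$, then $G$ is a Gr\"obner basis for $P$ if and only if the graded Hilbert functions of $K[x]/\langle LM(G)\rangle$ and $K[x]/P$ agree. Under $\eta$ we have $K[x]/P \cong K[\Gamma]$, so $\dim_K(K[x]/P)_\gamma$ equals $1$ for $\gamma \in \Gamma$ and $0$ otherwise. The task therefore reduces to exhibiting a bijection, induced by $\eta$, between the standard monomials of $K[x]/\langle LM(G)\rangle$ and $\Gamma$.

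To produce that bijection I would read off the leading monomials of $G$ and describe the standard monomials directly. The $\xi_{i,j}$ force at most one of $x_1,\dots,x_{p-1}$ to appear, and only to the first power; $\theta$ caps the $x_n$-exponent at $\upsilon-1$; the $\varphi_i$ bound the $x_p$-exponent, with the bound tightening when a middle variable $\geq r$ is present; and the $\psi_j$ remove a specific collection of further monomials. Packaging these restrictions, every standard monomial has the shape $x_0^a\,x_{r_s}x_p^{q_s}\,x_n^b$ with $s = q_sp + r_s \in [0,u-1]$, $b \in [0,\upsilon-1]$, and $(s,b) \notin W$ (the convention $r_0 = p,\, q_0 = -1$ absorbing the case of no middle variable and no $x_p$). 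Applying $\eta$ sends such a monomial to $am_0 + g_s + bm_n$, and Lemma~\ref{Parameters}(b) asserts precisely that every $\gamma \in \Gamma$ admits a unique representation of this form with $a \geq 0$ and $(s,b) \in V - W$. This gives the bijection, and AHH delivers the Gr\"obner basis property.

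For minimality I would run a pairwise check on leading monomials, ruling out each potential divisibility by means of the arithmetic facts $q_z \leq q$ and $q' \leq q$ (each with equality forcing $r_z < r$ or $r' < r$, respectively), together with $r, r' \geq 1$. For reducedness I would scan non-leading terms. The non-leading terms of $\theta$, the $\psi_j$, and the $\xi_{i,j}$ with $i+j \leq p$ are dispatched at once because they lack the $x_n$- or $x_p$-factor needed by any candidate divisor from $LM(G)$. Two cases remain. First, the non-leading term $x_0^{\lambda-1}x_ix_n^w$ of $\varphi_i$ can only be divided by some $LM(\psi_j) = x_{r'+j}x_p^{q'}x_n^{\upsilon-w}$; unpacking the divisibility together with the requirement that an admissible $j$ exist reduces exactly to condition C1. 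Second, the non-leading term $x_{i+j-p}x_p$ of $\xi_{i,j}$ with $i+j > p$ can only be divided by some $LM(\varphi_{i'}) = x_{r+i'}x_p^q$; this forces $q = 1$ together with $r + i' = i+j-p$ for some $i' \in [0,p-r]$ and admissible $(i,j)$, and a short range argument shows this is possible iff $r \leq p-2$, i.e., condition C2. Hence $G$ is reduced precisely when neither C1 nor C2 holds.

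The chief obstacle lies in the second paragraph, where one must verify that the $\varphi_i$- and $\psi_j$-restrictions correspond exactly to the set $V - W$ of Lemma~\ref{Parameters}(b), with neither spurious standard monomials nor missing ones --- and in particular that the parameter relations preclude any multiple of $p$ in $[u-z,u-1]$ from evading the $\psi_j$ reductions. Once that identification is in place, the AHH Hilbert-function criterion yields the Gr\"obner basis claim immediately, and the minimality and reducedness analyses collapse into the short case-checks sketched above.
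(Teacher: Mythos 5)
Your proposal follows the same strategy as the paper: invoke the Aramova--Herzog--Hibi lemma, then analyze the standard monomials of $\langle LM(G)\rangle$ in terms of Lemma~\ref{Parameters}, and finally run a direct check on leading and trailing terms for minimality and reducedness (landing on C1 and C2 in the same way). The one genuine difference is the route through the Gr\"obner-basis step. The paper's proof argues by contradiction: via Remark~\ref{KBasis}, it suffices to show that no difference $l_1 - l_2$ of two standard monomials lies in $P$, and any such collision is ruled out by contradicting the minimality of $u$, $\upsilon$, or the uniqueness of the parameters in Lemma~\ref{Parameters}(a). Since $\mathrm{in}(G)\subseteq\mathrm{in}(P)$, spanning is automatic, so only injectivity of $\eta$ on $B$ is needed; this keeps the case analysis lighter. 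You instead propose the ``positive'' route: explicitly identify the standard monomials with the index set $\{(a,s,b): a\geq 0,\ (s,b)\in V-W\}$ of Lemma~\ref{Parameters}(b) and read off the bijection with $\Gamma$. That identification is correct (the key is that $\psi_{p-r'}$, when present, kills $x_p^{q'+1}x_n^{\upsilon-w}$ and hence all $s\geq(q'+1)p$ regardless of the middle-variable index, which is what makes the excluded set come out as the full interval $[u-z,u-1]$), but establishing both inclusions involves essentially the same bookkeeping as the paper's contradiction argument, and you acknowledge leaving that as ``the chief obstacle'' rather than carrying it out. So: same key lemma, same reliance on Lemma~\ref{Parameters}, a complementary organization of the verification, with the central computation sketched rather than done.
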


\begin{proof}
The proof that $G$ is a Gr\"{o}bner basis is after Lemma \ref{AH} below.
Here we prove that $G$ is minimal (or reduced).

It is clear that $LM(\theta )\notin \langle LM(G-\{\theta \})\rangle $.
Since $w<\upsilon $ (by Lemma~\ref{Parameters} ) and since $q>0$ (by Remark~%
\ref{q>0-and-u>p}) it is clear that $LM(\varphi _{i})\notin \langle
LM(G-\{\varphi _{i}\})\rangle $ and $LM(\xi _{i,j})\notin \langle LM(G-\{\xi
_{i,j}\})\rangle $. To show $LM(\psi _{j})\notin \langle LM(G-\{\psi
_{j}\})\rangle $ it is clear that it suffices to show that $LM(\psi _{j})$
is not a multiple of any of $LM(\varphi _{i})$. If $q_{z}>0$ or $\varepsilon
>0$, then this is clear since $q^{\prime }<q$ (as $q^{\prime
}=q-q_{z}-\varepsilon )$ and since $\upsilon -w<\upsilon $ whenever $J\neq
\phi $. If $q_{z}=0$ and $\varepsilon =0,$ then $r^{\prime }=r-r_{z}$ and $%
z-1=r_{z}-1<p-r+r_{z}=p-r^{\prime }$. Thus there is no overlap between the
indices of the leading monomials of $\varphi _{i}$ and those of $\psi _{j}$.
This shows $G$ is minimal.

Define $SM(f)=f-LM(f)$ with $f$ a binomial. Recalling that $\nu \geq 2$ and $%
x_{0}$ divides no $LM(f)$ for any $f\in G$, it follows that $SM(\psi
_{j})\notin \langle LM(G-\{\psi _{j}\})\rangle $. Also, recalling that $%
w<\upsilon $ and $z<u$, it follows that $SM(\theta )\notin \langle
LM(G-\{\theta \})\rangle $ and $SM(\xi _{i,j})\notin \langle LM(G-\{\xi
_{i,j}\})\rangle $. If any of the parts of condition \textit{C1} does not
hold, then it follows that $SM(\varphi _{i})\notin \langle LM\{\psi
_{i}\};j\in J\rangle $ which suffices to show $SM(\varphi _{i})\notin
\langle LM(G-\{\varphi _{i}\})\rangle $. To show $SM(\xi _{i,j})\notin
\langle LM(G-\{\xi _{i,j}\})\rangle ,$ it is enough to show $SM(\xi
_{i,j})\notin \left\langle LM(\varphi _{k});0\leq k\leq p-r\right\rangle $
whenever $i+j>p$ because $w<\upsilon $ and $r>0$. But this clear if any of
the parts of condition \textit{C2} does not hold (recall $i+j-p\leq p-2$).
This proves that if none of \textit{C1} and \textit{C2} holds, then $G$ is
reduced.

Conversely, assume \textit{C1} holds. Then as $q^{\prime }=0$ and $\lambda
=1 $ then $LM(\psi _{0})=x_{r^{\prime }}x_{n}^{\upsilon -w}$. On the other
hand, since $r^{\prime }\leq p-r$ then $SM(\varphi _{r^{\prime
}})=x_{r^{\prime }}x_{n}^{w}$. Thus $SM(\varphi _{r^{\prime }})$ is a
multiple of \ $LM(\psi _{0})$\ whenever $\upsilon -w\leq w$. Thus $G$ is not
reduced. Assume \textit{C2} holds. Choose $i=p-1$ and $j=r+1$ (note that $%
j\leq p-1$ since $r\leq p-2$ by assumption). Then $SM(\xi
_{i,j})=x_{r}x_{p}=LM(\varphi _{0})$. Hence $G$ is not reduced.
\end{proof}

\ \ \ \ 

Note the toric varieties in Remark \ref{PnotGBinSomeA3} do not satisfy any
of the conditions \textit{C1} or \textit{C2} as $r=p=r^{\prime }=1$. This
provides a family of toric varieties with reduced Gr\"{o}bner bases, while
the following example provides a mimimal Gr\"{o}bner basis which is not
reduced.

\begin{example}
Let $m_{0}=5,m_{1}=6,m_{2}=7,m_{3}=8,$ and $m_{4}=9$ so that $n=4$ and $p=3$%
. Note $g_{4}-m_{0}=m_{3}+m_{1}-m_{0}=9=m_{4}\in \Gamma $. Hence, $u=4$.
Thus $q=1$ and $r=1$. Thus \textit{C2 holds. }Also, $\upsilon =2$ as $%
2(m_{4})=2m_{0}+m_{3}$. Note $g_{4}=m_{0}+m_{4}$, hence $\lambda =1$ and $%
w=1 $. Also, $\upsilon m_{4}=3m_{0}+m_{3}$, thus $z=3$. Now, $q^{\prime
}=q_{u-z}=0$ and $r^{\prime }=r_{u-z}=1$. Thus C1 holds.
\end{example}

To prove the main theorem we use the following lemma of Aramova et al.

\begin{lemma}
\label{AH}(Lemma 1.1, \cite{AHH}) Let $I\subset R=K[x_{0},\ldots ,x_{n}]$ be
a graded ideal and $G$ a finite subset of homogenous elements of $I$. Given
a term order $<$, there exist a unique monomial $K$-basis $B$ of $%
R/(in_{<}(G))$. If $B$ is a $K$-basis of $R/I$, then $G$ is a Gr\"{o}bner
basis of $I$ with respect to $<$.
\end{lemma}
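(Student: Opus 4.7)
The plan is to prove the lemma in two parts: first, explicitly identify the unique monomial $K$-basis $B$ of $R/(in_{<}(G))$ as the set of standard monomials; second, assuming this same $B$ is a $K$-basis of $R/I$, show that $in_{<}(I)=(in_{<}(G))$, which is by definition the statement that $G$ is a Gr\"obner basis of $I$.

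For the first step, I would use that $(in_{<}(G))$ is a monomial ideal, so the set of all monomials of $R$ splits as the disjoint union of those lying in $(in_{<}(G))$, which map to zero in the quotient, and those not in $(in_{<}(G))$; let $B$ denote the latter set. Any polynomial can be reduced modulo $(in_{<}(G))$ by replacing each monomial in $(in_{<}(G))$ by zero, producing an element of the $K$-span of $B$, so the images of elements of $B$ span $R/(in_{<}(G))$. A nontrivial $K$-linear relation among these images would give a polynomial in $(in_{<}(G))$ whose monomial support is entirely outside $(in_{<}(G))$, which is impossible; hence $B$ is a $K$-basis, and it is the unique monomial basis because distinct monomials outside $(in_{<}(G))$ descend to distinct, linearly independent classes, forcing any monomial basis to consist of exactly these representatives.

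For the second step, the containment $(in_{<}(G)) \subseteq in_{<}(I)$ is automatic from $G \subseteq I$. For the reverse, I would argue by contradiction: suppose some homogeneous $f \in I$ has $in_{<}(f) \notin (in_{<}(G))$. Apply the standard division algorithm dividing $f$ by the elements of $G$ to obtain a representation $f = \sum_{k} t_{k} g_{k} + r$ in which no monomial of $r$ lies in $(in_{<}(G))$; equivalently, $r$ is a $K$-linear combination of elements of $B$. Since $f \in I$ and each $g_{k} \in I$, we have $r \in I$, so the class of $r$ in $R/I$ is zero; linear independence of $B$ in $R/I$ then forces $r = 0$. But on input $f$ with $in_{<}(f) \notin (in_{<}(G))$, the very first step of the division algorithm moves $in_{<}(f)$ into $r$, contradicting $r = 0$.

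The crux (rather than an obstacle) is the simultaneous use of the two roles of $B$: its description as the standard monomials forces the division-algorithm remainder $r$ into the $K$-span of $B$, while the hypothesis that $B$ is linearly independent in $R/I$ kills any such $r$ that happens to lie in $I$. An equally clean alternative route would invoke the standard Hilbert-function equality $\dim_{K}(R/I)_{d}=\dim_{K}(R/in_{<}(I))_{d}$ in each degree $d$, combined with $(in_{<}(G)) \subseteq in_{<}(I)$, to force these two ideals to have the same set of standard monomials and hence coincide. Either way the proof is short and purely formal; no feature specific to toric ideals or to the chosen monomial order is used.
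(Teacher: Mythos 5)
Your proof is correct. Note, however, that the paper does not supply a proof of this lemma at all: it is quoted as Lemma 1.1 of Aramova--Herzog--Hibi \cite{AHH} and invoked as a black box, so there is no in-paper argument to compare against. Both routes you sketch are standard and sound. In the division-algorithm version the key chain is: the remainder $r$ lies in the $K$-span of the standard monomials $B$; $r\in I$ because $f$ and each $g_k$ do; hence $r=0$ by linear independence of $B$ in $R/I$; yet $in_{<}(f)\notin(in_{<}(G))$ forces $in_{<}(f)$ into $r$, so $r\neq 0$, a contradiction. The Hilbert-function alternative (Macaulay's theorem gives $\dim_K(R/I)_d=\dim_K(R/in_{<}(I))_d$ for each $d$; combine with $(in_{<}(G))\subseteq in_{<}(I)$ and the shared basis $B$ to force equality of the two monomial ideals) is, if anything, the cleaner phrasing and closer to how the result is usually stated. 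One point worth making explicit in the first step: a polynomial belongs to a monomial ideal if and only if every monomial in its support does; this single fact underlies both the spanning and the independence claims for $B$, and also justifies that the remainder in the division algorithm lands in the $K$-span of $B$.
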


\begin{remark}
\label{KBasis}Let $P\subset R=K[x_{0},\ldots ,x_{n}]$\ be the kernel of the $%
K$-algebra homomorphism $\eta :R\rightarrow K[t]$ defined by $\eta
(x_{i})=t^{m_{i}}$ with $m_{0},\ldots ,m_{n}$ an almost arithmetic sequence
of positive integers with $gcd(m_{0},\ldots ,m_{n})=1$. Then a set $B$ is a $%
K$-basis of $R/P$ if and only if $l_{1}-l_{2}\notin P$ for any two monomials 
$l_{1},l_{2}\in B$ with $l_{1}\neq l_{2}$.
\end{remark}

\begin{proof}
Assume there exist $l_{1},\ldots ,l_{s}\in B$ and $c_{1},\ldots ,c_{s}\in K$
not all zero such that $\tsum c_{i}l_{i}\in P$. This implies that $\tsum
c_{i}\eta (l_{i})=0$. Hence by the definition of $\eta ,$ there exist $i\neq
j$ such that $\eta (l_{i})=\eta (l_{j})$. This implies that $l_{i}-l_{j}\in
P $.
\end{proof}

\ 

\begin{proof}
\textbf{(of Theorem \ref{MainThm})} Let $G$ be as in the theorem (it
consists of homogenous binomials according to the grading $wt(x_{i})=m_{i}$%
). By Lemma \ref{AH} let $B$ be the unique monomial $K$-basis of $%
R/(in_{<}(G))$. Assume $0\neq l_{1}-l_{2}\in P$ for some monomials $%
l_{1},l_{2}\in B$. Then we show there is a contradiction to Lemma \ref%
{Parameters}, and hence the proof is done by the above lemma and remark.

Throughout the proof let $i,j,$ and $\delta _{k}$ be positive integers such
that $1\leq i,j\leq p-1$ and $\delta _{k}=0$ or $1$. Also, we will use the
sentence "without loss of generality" repeatedly. The usage of this sentence
will be in instances as follows. If a monomial $\beta $ divides $l_{1}$ and $%
l_{2}$, then write $l_{1}-l_{2}=\beta \left( l_{1}^{\prime }-l_{2}^{\prime
}\right) $ with $\beta $ does not divide $l_{1}^{\prime }$ or $\beta $ does
not divide $l_{2}^{\prime }$. Note $l_{1}-l_{2}\in P$ if and only if $\eta
(l_{1})-\eta (l_{2})=0$ if and only if $\eta (l_{1}^{\prime })-\eta
(l_{2}^{\prime })=0$ if and only if $l_{1}^{\prime }-l_{2}^{\prime }\in P$.

First, we work the proof under the assumption that $x_{n}$ divides either $%
l_{1}$ or $l_{2}$. Without loss of generality\ assume $x_{n}^{a_{1}}$
divides $l_{1}$ for some $a_{1}<\upsilon $\ but $x_{n}$ does not divide $%
l_{2}$. Consider two cases:

\underline{Case $x_{0}$ divides neither $l_{1}$ nor $l_{2}$}: then $%
x_{p}^{a_{2}}$ must divide $l_{2}$ for some $a_{2}$, otherwise $l_{2}=x_{j}$
for some $1\leq j\leq p-1$ (as $x_{i}x_{j}\notin B$ for $1\leq i\leq j\leq
p-1$). But this is a contradiction to the minimality of the generating set
of $\Gamma $. We may assume that $x_{p}$ does not divide $l_{1}$, therefore,
we have $l_{1}=x_{j}^{\delta _{1}}x_{n}^{a_{1}}$ and $l_{2}=x_{i}^{\delta
_{2}}x_{p}^{a_{2}}$ with $a_{2}<q+\sigma $ and $\sigma =1$ or $0$ according
as $i<r$ or $i\geq r$. Since $\eta (l_{1})=\eta (l_{2})$ we get the
following equality 
\begin{equation}
\delta _{1}m_{j}+a_{1}m_{n}=\delta _{2}m_{i}+a_{2}m_{p}  \tag{1}  \label{eq1}
\end{equation}%
If $\delta _{1}=0$, then $a_{1}m_{n}\in \Gamma ^{\prime }$, but $%
a_{1}<\upsilon $, thus this gives a contradiction to the minimality of $%
\upsilon $ in Lemma \ref{Parameters}, hence done. Therefore, assume $\delta
_{1}=1$. If $\delta _{2}=0$, then the above equality becomes $%
m_{0}+a_{1}m_{n}=(a_{2}-1)m_{p}+m_{p-j}$. Note the right-hand side is $%
g_{(a_{2}-1)+(p-j)}$. Thus $g_{(a_{2}-1)+(p-j)}-m_{0}=a_{1}m_{n}\in \Gamma $%
. This gives a contradiction to the minimality of $u$ in Lemma \ref%
{Parameters} as $a_{2}-1<q$ and hence $(a_{2}-1)+(p-j)<u$. If $\delta _{2}=1$
then $\left( \ref{eq1}\right) $ becomes $(1-\gamma
)m_{0}+a_{1}m_{n}=(a_{2}-\gamma )m_{p}+m_{\gamma p+i-j}$ with $\gamma =0$ or 
$1$ according as $i>j$ or $i<j$. If $\gamma =1,$ then this gives a
contradiction to the minimality of $\upsilon $, on the other hand, if $%
\gamma =0$, then we get a contradiction to the minimality of $u$ (noting $%
a_{2}\leq q$ if $i<r$\ and $a_{2}<q$ if $i\geq r$).

\underline{Case $x_{0}$ divides either $l_{1}$ or $l_{2}$}:

Consider four subcases:

\underline{Subcase 1}: $x_{0}^{b}$ divides $l_{1}$ for some $b\geq 1$ (and
without loss of generality $x_{0}$ does not divide $l_{2}$). Then $%
x_{p}^{a_{2}}$ must divide $l_{2}$ for some $a_{2}$\ (we may assume that $%
x_{p}$ does not divide $l_{1}$), otherwise $l_{2}=x_{j}$ for some $1\leq
j\leq p-1$ which is a contradiction to the minimality of the generating set
of $\Gamma $. Therefore, we have $l_{1}=x_{0}^{b}x_{j}^{\delta
_{1}}x_{n}^{a_{1}}$ and $l_{2}=x_{i}^{\delta _{2}}x_{p}^{a_{2}}$ with $%
a_{2}<q+\sigma $ and $\sigma =1$ or $0$ according as $i<r$ or $i\geq r$.
Since $\eta (l_{1})=\eta (l_{2})$ we get $bm_{0}+\delta
_{1}m_{j}+a_{1}m_{n}=\delta _{2}m_{i}+a_{2}m_{p}$. This is a contradiction
to the minimality of $u$ as $a_{2}p+i<qp+r=u$ and $b\geq 1$.

\underline{Subcase 2}: $x_{0}^{b}$ divides $l_{2}$ for some $b\geq 1$ (and
without loss of generality $x_{0}$ does not divide $l_{1}$). There are three
subcases;

\underline{Subsubcase 2-1}: $x_{p}$ does not divide any of $l_{1}$ or $l_{2}$%
. Then $l_{1}=x_{j}^{\delta _{1}}x_{n}^{a_{1}}$ and $l_{2}=x_{0}^{b}x_{i}^{%
\delta _{2}}$. Note that if $\delta _{1}=1$, $q^{\prime }=0$, and $a_{1}\geq
\upsilon -w$,\ then we must have $j<r^{\prime }$, otherwise $l_{1}$ is a
multiple of $LM(\psi _{j-r^{\prime }})$ and hence is not in $B$. Since $\eta
(l_{1})=\eta (l_{2})$ we get 
\begin{equation}
\delta _{1}m_{j}+a_{1}m_{n}=bm_{0}+\delta _{2}m_{i}  \tag{2}  \label{eq2}
\end{equation}%
If\ $\delta _{1}=0$, then (\ref{eq2}) becomes $a_{1}m_{n}=bm_{0}+\delta
_{2}m_{i}$. This is a contradiction to the minimality of $\upsilon $. If\ $%
\delta _{1}=1$ and $\delta _{2}=0$, then (\ref{eq2}) becomes $%
m_{j}+a_{1}m_{n}=bm_{0}.$ By Part (iii) and the uniqueness of the parameters
in Lemma \ref{Parameters}, this equality suggests that $a_{1}=\upsilon -w$, $%
\nu =b+1$, and $q^{\prime }=0$. This implies $u-z=j$. But $j<r^{\prime }$ by
the note above, hence $u-z<r^{\prime }$ which is impossible (see Notations %
\ref{W&q'&r'} and \ref{g-sub-t}). If\ $\delta _{1}=1$, $\delta _{2}=1$, and $%
j>i$, then (\ref{eq2}) becomes $m_{j-i}+a_{1}m_{n}=(b+1)m_{0}$. By Part
(iii) and the uniqueness of the parameters in Lemma \ref{Parameters}, this
equality suggests that $a_{1}=\upsilon -w$, $\nu =b+2$, and $q^{\prime }=0$.
This implies $u-z=j-i<r^{\prime }$ which is impossible. If\ $\delta _{1}=1$, 
$\delta _{2}=1$, and $j<i$, then (\ref{eq2}) becomes $%
a_{1}m_{n}=(b-1)m_{0}+m_{i-j}$. This is a contradiction to the minimality of 
$\upsilon $ in Lemma \ref{Parameters}.

\underline{Subsubcase 2-2}: $x_{p}^{a_{2}}$ divides $l_{2}$ for some $a_{2}$%
. Then we have $l_{1}=x_{j}^{\delta _{1}}x_{n}^{a_{1}}$ and $%
l_{2}=x_{0}^{b}x_{i}^{\delta _{2}}x_{p}^{a_{2}}$ with $a_{2}<q+\sigma $ and $%
\sigma =1$ or $0$ according as $i<r$ or $i\geq r$. Since $\eta (l_{1})=\eta
(l_{2})$ we get $\delta _{1}m_{j}+a_{1}m_{n}=bm_{0}+\delta
_{2}m_{i}+a_{2}m_{p}$. Thus $a_{1}m_{n}=(b-\delta _{1})m_{0}+\delta
_{2}m_{i}+(a_{2}-\delta _{1})m_{p}+\delta _{1}m_{p-j}\in \Gamma ^{\prime }$.
This is a contradiction to the minimality of $\upsilon $.

\underline{Subsubcase 2-3}: $x_{p}^{a_{2}}$ divides $l_{1}$ for some $a_{2}$%
. Then we have $l_{1}=x_{j}^{\delta _{1}}x_{p}^{a_{2}}x_{n}^{a_{1}}$ with $%
l_{2}=x_{0}^{b}x_{i}^{\delta _{2}}$ with $\delta _{k}=0$ or $1$ and with
appropriate values of $a_{1},a_{2},i,$ and $j$ so that $l_{1},l_{2}\in B$.
Assume $\delta _{1}=1$. Since $\eta (l_{1})=\eta (l_{2})$ we get $\delta
_{1}m_{j}+a_{2}m_{p}+a_{1}m_{n}=bm_{0}+\delta _{2}m_{i}$. Thus we have 
\begin{equation*}
m_{\gamma p+j-i\delta _{2}}+(a_{2}-\gamma )m_{p}+a_{1}m_{n}=(b+\delta
_{2}-\gamma )m_{0}
\end{equation*}%
where $\gamma =1$ or $0$ according as $i>j$ or $i<j$. By part (iii) and the
uniqueness in Lemma \ref{Parameters}, this equality suggests that $%
u-z=(a_{2}-\gamma )p+\gamma p+j-i\delta _{2}$, $a_{1}=\upsilon -w$, and $\nu
=b+\delta _{2}-\gamma $. This is a contradiction to the uniqueness of $z$
and $\nu $\ since $\delta _{2}$\ and $\gamma $\ may vary non-simultaneously.
Similarly, we get a contradiction for the case $\delta _{1}=0$.

\ Finally, we finish the proof by taking care of the remaining case where $%
x_{n}$ divides neither $l_{1}$ nor $l_{2}$. Consider two cases:

\underline{Case $x_{0}$ divides neither $l_{1}$ nor $l_{2}$}: in such a case 
$l_{1}=x_{i_{1}}^{\delta _{1}}x_{p}^{a_{1}}$ and $l_{2}=x_{i_{2}}^{\delta
_{2}}x_{p}^{a_{2}}$ with $1\leq i_{j}\leq p-1$ and $a_{k}<q+\sigma $ and $%
\sigma =1$ or $0$ according as $i_{k}<r$ or $i_{k}\geq r$. Following similar
process as above one can easily show that there is a contradiction.

\underline{Case $x_{0}$ divides either $l_{1}$ or $l_{2}$}: then, and
without loss of generality, we have $l_{1}=x_{i_{1}}^{\delta
_{1}}x_{p}^{a_{1}}$ and $l_{2}=x_{0}^{b}x_{i_{2}}^{\delta _{2}}x_{p}^{a_{2}}$%
. Following similar process as above one can easily show that there is a
contradiction.
\end{proof}

\ \ \ \ \ 

Patil and Singh \cite{PaSi} constructed a generating set (but not minimal)
for the defining ideal $P$. We call the elements of this set the \textit{%
Patil-Singh generators}. The generators in this set are the same as before
but with different indices as follows (with $q,r,q_{z},r_{z},q^{\prime
},r^{\prime },$ and $\varepsilon $ as before);\ \ \ \ 
\begin{equation*}
\{\xi _{i,j}\mid 1\leq i\leq j\leq p-1\}\cup \{\theta \}\cup \{\varphi
_{i}\mid 0\leq i\leq p-r\}\cup \{\psi _{j}\mid 0\leq j\leq p-r^{\prime }\}%
\text{.}
\end{equation*}

Note that the sets of indices of $\varphi _{i}$ and of $\psi _{i}$ in the
Patil-Singh generators are $[0,p-r]$ and $[0,p-r^{\prime }]$, respectively.
On the other hand, the set of indices of $\varphi _{i}$ and of $\psi _{i}$
in the Patil generators are $I$ and $J$, respectively. It turned out that
the Patil set in contained in $G$ \ (where $G$ as in Theorem \ref{MainThm})
which in turn is contained in the Patil-Singh set. Also, note that the set
of Patil-Singh generators has the advantage of a simpler set of indices than
the set $G$. Therefore, whenever the minimality is not an issue, it is much
easier to deal with the set of Patil-Singh generators than with $G$. The
theorem below proves that the set of Patil-Singh generators is indeed a Gr%
\"{o}bner basis . To prove the theorem below we need the following
proposition which helps to visualize the interval $J$ given by Patil \cite%
{Pat}.

\begin{proposition}
\label{Min_z-1&p-r'}\textit{Let }$z>0$\textit{\ and let }$z=q_{z}p+r_{z%
\mathit{\ }}$\textit{with }$q_{z}\in \mathbb{Z\ }$\textit{and }$r_{z}\in
\lbrack 1,p]$\textit{. Then }$min\{z-1,p-r^{\prime }\}=\left\{ 
\begin{tabular}{ll}
$p-r^{\prime }\text{,}$ & if$\text{ \ }r\leq r_{z}\text{;}$ \\ 
$p-r^{\prime }\text{,}$ & if$\text{ \ }r>r_{z}\text{ and\ }z>p\text{;}$ \\ 
$z-1\text{,}$ & $\text{if \ }r>r_{z}\text{ and }z\leq p\text{.}$%
\end{tabular}%
\right. $\newline
\textit{Moreover, }$z\leq p$\textit{\ if and only if }$q_{z}=0$\textit{.}
\end{proposition}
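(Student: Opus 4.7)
The proof strategy is a direct case analysis driven by the formula for $r'$ derived in the paragraph preceding Theorem~\ref{MainThm}, namely $r' = \varepsilon p + r - r_z$ where $\varepsilon=1$ if $r\leq r_z$ and $\varepsilon=0$ if $r>r_z$. In each branch the value of $p-r'$ will be either $r_z-r$ or $p-r+r_z$, and we simply compare it with $z-1$.

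I would begin by disposing of the ``moreover'' clause, since it is needed to simplify the subcases. Write $z=q_z p+r_z$ with $r_z\in[1,p]$ and $q_z\geq 0$ (the latter because $z>0$). If $q_z=0$ then $z=r_z\leq p$. Conversely, if $q_z\geq 1$, then $z\geq p+r_z\geq p+1>p$. Contrapositive gives $z\leq p \Rightarrow q_z=0$, and the equivalence is proved.

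Next, I would handle the three cases of the min. In the case $r\leq r_z$, the formula gives $\varepsilon=1$ and $p-r'=r_z-r$. Since $z\geq r_z$ and $r\geq 1$, we have $z-1\geq r_z-1\geq r_z-r$, so the minimum equals $p-r'$. In the case $r>r_z$, we have $\varepsilon=0$ and $p-r'=p-r+r_z$. If in addition $z>p$, then by the ``moreover'' part $q_z\geq 1$, so $z-1\geq p\geq p-r+r_z+1$ (using $r\geq r_z+1$), and the minimum is $p-r'$. If instead $z\leq p$, then $q_z=0$ and $z=r_z$, so $z-1=r_z-1\leq r_z+p-r=p-r'$ (using $r\leq p$), and the minimum is $z-1$.

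None of the steps should pose any real obstacle; the only care needed is in tracking the inequalities between $r$, $r_z$, and $1$, and remembering that the definition of $\varepsilon$ in the paragraph before Theorem~\ref{MainThm} places the boundary case $r=r_z$ in the branch $\varepsilon=1$ (i.e.\ $r\leq r_z$), which matches the first case of the proposition. The total argument is only a few lines of arithmetic once the formula $r'=\varepsilon p+r-r_z$ is invoked.
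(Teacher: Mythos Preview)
Your proof is correct and follows essentially the same approach as the paper: both invoke the formula $r'=\varepsilon p+r-r_{z}$ (equivalently $p-r'=(1-\varepsilon)p+r_{z}-r$) and then compare $z-1$ with $p-r'$ in the three natural cases. The only cosmetic difference is that you prove the ``moreover'' clause first and then cite it, whereas the paper absorbs that observation directly into the case analysis.
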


\begin{proof}
First note that $p-r^{\prime }=(1-\varepsilon )p+r_{z}-r$ where $\varepsilon
=0$ or $1$ according as $r>r_{z}$ or $r\leq r_{z}$. It is obvious that if $%
z>0$\ then $q_{z}\geq 0$. Consider three cases:\newline
Case $r\leq r_{z}$: since $r\in \lbrack 1,p]$\ then $z-1=q_{z}p+r_{z}-1\geq
r_{z}-1\geq r_{z}-r=p-r^{\prime }$.\newline
Case $r>r_{z}$\ and $z>p$: this implies $q_{z}\geq 1.$\ Therefore, $%
z-1=q_{z}p+r_{z}-1\geq p+r_{z}-1\geq p+r_{z}-r=p-r^{\prime }$.\newline
Case $r>r_{z}$\ and $z\leq p$: this implies $q_{z}=0$. Therefore, $%
z-1=r_{z}-1\leq r_{z}-1+p-r<p+r_{z}-r=p-r^{\prime }$.
\end{proof}

\ \ \ \ 

Therefore, whenever $W\neq \phi $ we write $J$\ as follows

$J=\left\{ 
\begin{tabular}{ll}
$\lbrack 0,p-r^{\prime }]\text{,}$ & if $q_{z}>0$ or $\varepsilon >0$; \\ 
$\lbrack 0,r_{z}-1]\text{,}$ & if $q_{z}=0$ and $\varepsilon =0$.%
\end{tabular}%
\right. $

\begin{theorem}
\label{SecondMainThm}\textit{The set }$S=\{\varphi _{i}\mid 0\leq i\leq
p-r\}\cup \mathit{\ }\{\psi _{j}\mid 0\ \leq \ j\leq p-r^{\prime }\}\cup
\{\theta \}$\textit{\ }$\cup $\textit{\ }$\{\xi _{i,j}\mid 1\leq i\leq j\leq
p-1\}$, that is, the set of Patil-Singh generators,\textit{\ is a Gr\"{o}%
bner basis (not minimal) for the ideal }$P$\textit{\ with respect to the
grevlex monomial order with }$x_{0}<x_{1}<\cdots <x_{n}$\textit{\ and with
the grading }$wt(x_{i})=m_{i}$\textit{.}\ \ 
\end{theorem}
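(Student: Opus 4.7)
The plan is to reduce Theorem \ref{SecondMainThm} to Theorem \ref{MainThm} by a simple set-theoretic inclusion argument, avoiding any direct $S$-polynomial calculation.

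First I would verify the set inclusion $G\subseteq S$, where $G$ is the set of Theorem \ref{MainThm}. The two sets differ only in the indexing of the family $\psi_j$: in $G$ the index runs over $J$, whereas in $S$ it runs over $[0,p-r^{\prime}]$. So it suffices to show $J\subseteq[0,p-r^{\prime}]$. If $W=\phi$ this is trivial since $J=\phi$. Otherwise $J=[0,\min(z-1,p-r^{\prime})]$, and Proposition \ref{Min_z-1&p-r'} shows this interval equals $[0,p-r^{\prime}]$ when $q_z>0$ or $\varepsilon>0$, and equals $[0,r_z-1]$ when $q_z=0$ and $\varepsilon=0$. In the latter subcase $\varepsilon=0$ forces $r^{\prime}=r-r_z$, so $p-r^{\prime}=p-r+r_z\geq r_z>r_z-1$. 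Thus $J\subseteq[0,p-r^{\prime}]$ in every case, and $G\subseteq S$.

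Next I would combine this with the Gr\"{o}bner property of $G$. By Theorem \ref{Patil&Patil-Singh Gens} every element of $S$ lies in $P$, so $\langle in_{<}(S)\rangle\subseteq in_{<}(P)$. Conversely, $G\subseteq S$ yields $\langle in_{<}(G)\rangle\subseteq\langle in_{<}(S)\rangle$, and Theorem \ref{MainThm} provides $\langle in_{<}(G)\rangle=in_{<}(P)$. These inclusions sandwich $\langle in_{<}(S)\rangle$ between two copies of $in_{<}(P)$, forcing equality, which by definition says that $S$ is a Gr\"{o}bner basis for $P$.

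There is essentially no obstacle to this plan: the only step that requires more than one line of writing is checking the inequality $r_z-1\leq p-r^{\prime}$ in the residual subcase of the index comparison, and everything else is purely formal. The heavy lifting has already been done inside the proof of Theorem \ref{MainThm}; the role of Proposition \ref{Min_z-1&p-r'} here is simply to make the interval $J$ explicit enough to read off the inclusion $J\subseteq[0,p-r^{\prime}]$.
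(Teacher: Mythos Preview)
Your proposal is correct and follows essentially the same route as the paper: both reduce to Theorem \ref{MainThm} by establishing $G\subseteq S$ through the case analysis of $J$ supplied by Proposition \ref{Min_z-1&p-r'}. The only difference is in the final step: the paper explicitly checks that the extra leading monomials $LM(\psi_j)$ for $j\in[r_z,p-r']$ are multiples of some $LM(\varphi_i)$ (so that $in_<(S)=in_<(G)$) and then invokes Lemma \ref{AH}, whereas your sandwich argument $\langle in_<(G)\rangle\subseteq\langle in_<(S)\rangle\subseteq in_<(P)=\langle in_<(G)\rangle$ bypasses that computation entirely---a small but genuine streamlining.
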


\begin{proof}
Recall $q^{\prime }=q-q_{z}-\varepsilon $ and $r^{\prime }=\varepsilon
p+r-r_{z}$ where $\varepsilon =0$ or $1$ according as $r>r_{z}$ or $r\leq
r_{z}$. If $q_{z}>0$ or $\varepsilon >0$, then $J=[0,p-r^{\prime }]$ and the
set of Patil-Singh generators coincides with the set $G$ of Theorem \ref%
{MainThm}$,$ hence done. If $q_{z}=0$ and $\varepsilon =0$, then $q^{\prime
}=q$, $r^{\prime }=r-r_{z}$, and$\ J=[0,r_{z}-1]$. Also note $r_{z}\leq
r_{z}+p-r=p-r^{\prime }$. Now consider $LM(\psi _{j})$ where $j$ runs over $%
[r_{z},p-r^{\prime }]$ (this indicates the binomials that exist in
Patil-Singh but not in $G$) we get $\{LM(\psi _{j})=x_{j+r^{\prime
}}x_{p}^{q^{\prime }}x_{n}^{\upsilon -w}$ $\mid r_{z}\ \leq $\ $j\leq
p-r^{\prime }\}=\{x_{j}x_{p}^{q}x_{n}^{\upsilon -w}$ $\mid r\ \leq $\ $j\leq
p\}=x_{n}^{\upsilon -w}\{LM(\varphi _{i})=x_{j+r}x_{p}^{q}$ $\mid 0\ \leq $\ 
$j\leq p-r\}$.\ Therefore, the monomial $K$-basis of $R/(in_{<}(S))$ is
essentially the same as the monomial $K$-basis of $R/(in_{<}(G))$ where $S$
is the set of the Patil-Singh generators. Hence done by Lemma \ref{AH}.
\end{proof}

\ \ \ \ \ \ 

Finally, we finish this paper by noting that Patil-Singh generators do not
form a Gr\"{o}bner basis in all cases if we consider the grevlex monomial
order with the same grading as before but with\ $x_{0}>x_{1}>\cdots >x_{n}$
( in this case \textit{\ }$\prod\limits_{i=0}^{n}x_{i}^{a_{i}}>_{grevlex}%
\prod\limits_{i=0}^{n}x_{i}^{b_{i}}$\ if in the ordered tuple $%
(a_{1}-b_{1},\ldots ,a_{n}-b_{n})$\ the right-most nonzero entry is
negative). In the following we prove this and give an example.

\begin{remark}
\label{P-SnotGB}Assume $r<r_{z}<p$ (hence $\varepsilon =0$), $\lambda >1$,
and $w>0$. Then \textit{Patil-Singh generators are not a Gr\"{o}bner basis
with respect to the grevlex monomial ordering with }$x_{0}>x_{1}>\cdots
>x_{n}$ \textit{and \ with the grading }$wt(x_{i})=m_{i}$\textit{\ .}
\end{remark}

\begin{proof}
First note $LT(\varphi _{i})=x_{i+r}x_{p}^{q}$ if $w>0$ and $LT(\varphi
_{i})=x_{0}^{\lambda -1}x_{i}$\ if $w=0$. Also, $LT(\psi
_{j})=x_{0}^{\lambda +\mu -\varepsilon }x_{j}$, $LT(\theta )=x_{0}^{\mu
}x_{r_{z}}x_{p}^{q_{z}}$, and $LT\left( \xi _{i,j}\right) =x_{i}x_{j}$. If $%
r<r_{z}<p$ (hence $\varepsilon =0$), $\lambda >1$, and $w>0$, then none of
the terms of $S(\xi _{1,r_{z}},\theta )=x_{1}x_{n}^{\upsilon }-x_{0}^{\mu
+1}x_{r_{z}+1}x_{p}^{q_{z}}$ is a multiple of any of the leading terms of
the Patil-Singh generators.
\end{proof}

\begin{example}
Let $m_{0}=20,m_{1}=21,m_{2}=22,m_{3}=23,m_{4}=24,$ and $m_{5}=29$. Note $%
n=5 $ and $p=4$. Let $P$\ be the kernel of the $K$-algebra homomorphism $%
\eta :K[x_{0},\ldots ,x_{5}]\rightarrow K[t]$ defined by $\eta
(x_{i})=t^{m_{i}}$. Recall the parameters in Lemma~\ref{Parameters}. It is
easy to check that $\upsilon =3$, hence by the uniqueness condition we must
have $\mu =2$, $q_{z}=1$, and $r_{z}=3$, thus $z=7$. For $1\leq i\leq 3$
note that in order for $am_{4}+m_{i}-m_{0}$ to be in $\ \Gamma $ we must
have $a\geq 2$. Note $g_{2p+1}=2(24)+21=2(20)+29$. Therefore, we conclude
that $u=2p+1=9$, thus $q=2$ and $r=1$ . Hence, $\lambda =2$, $w=1$, $%
r^{\prime }=2$, and $q^{\prime }=1$. Therefore, Patil-Singh generators are
as follows: $G=\{\varphi _{i}\mid 0\leq i\leq 3\}\cup \{\psi _{j}\mid 0\leq
j\leq 2\}\cup \{\theta \}\cup \{\xi _{i,j}\mid 1\leq i\leq j\leq 3\}$ where $%
\varphi _{i}=\underline{x_{i+1}x_{4}^{2}}-x_{0}x_{i}x_{5}$, and $\psi
_{j}=x_{j+2}x_{5}^{2}-\underline{x_{0}^{3}x_{j}}$, and $\theta =x_{5}^{3}-%
\underline{x_{0}^{2}x_{3}x_{4}}$ and $\xi _{i,j}=\underline{x_{i}x_{j}}%
-x_{0}^{(1-\gamma )}x_{i+j-\gamma p}x_{p}^{\gamma }$ with $\gamma =0$ or $1$
according as $i+j\leq p$ or $i+j>p$. The set $G$ is not Gr\"{o}bner basis
with respect to the grevlex monomial ordering with $x_{0}>x_{1}>\cdots
>x_{5} $ and with the grading $wt(x_{i})=m_{i}$: consider $S(\theta ,\xi
_{1,3})=x_{1}x_{5}^{3}-x_{0}^{3}x_{4}^{2}$. Note that neither term of $%
S(\theta ,\xi _{1,3})$ is a multiple of any of the leading terms above.%
\vspace{0in}
\end{example}

\section*{Acknowledgement}

The author thanks Professor Irena Swanson for the useful discussions and
comments during the course of this work. Also, the author thanks the referee
for the very useful suggestion that simplified the proof much easier than
the original form.

\ \ \ \ 

{\small Ibrahim Al-Ayyoub, assistant professor}

{\small Department of Mathematics and Statistics}

{\small Jordan University of Science and Technology}

{\small P O Box 3030, Irbid 22110, Jordan.}

{\small Email address: iayyoub@just.edu.jo}

{\small \ \ \ \ \ \ \ \ }

\end{document}